\newtheorem{theorem}{Theorem}[section]
\numberwithin{equation}{section}
\newcommand{\bbR}{\mathbb{R}}
\newcommand{\bbD}{\mathbb{D}}
\newcommand{\bbN}{\mathbb{N}}
\newcommand{\eitheta}{e^{i\theta}}
\newcommand{\eitau}{e^{i\tau}}
\newcommand{\eilambda}{e^{i\lambda}}
\newcommand{\Phib}{\Phi_{n+1}^{(\beta_n)}}
\newcommand{\nri}{n\rightarrow\infty}
\newcommand{\jri}{j\rightarrow\infty}
\newcommand{\bsa}{\boldsymbol{\alpha}}
\begin{document}

\title[ ] {Zero Spacing of Paraorthogonal Polynomials on the Unit Circle}

\bibliographystyle{plain}

\thanks{  }


\maketitle




\begin{center}
\textbf{Brian Simanek}
\end{center}


\begin{abstract}
We prove some new results about the spacing between neighboring zeros of paraorthogonal polynomials on the unit circle.  Our methods also provide new proofs of some existing results.  The main tool we will use is a formula for the phase of the appropriate Blaschke product at points on the unit circle.
\end{abstract}


\vspace{4mm}

\footnotesize\noindent\textbf{Keywords:} Paraorthogonal polynomials, Pr\"{u}fer phase, Blaschke products

\vspace{2mm}

\noindent\textbf{2010 Mathematics Subject Classification:} 42C05, 26C10, 30J10

\vspace{2mm}

\normalsize

\section{Introduction}\label{intro}

Given an infinitely supported probability measure $\mu$ on the unit circle $\partial\bbD$, it is well known how to obtain the corresponding monic orthogonal polynomials $\{\Phi_n(z)\}_{n=0}^{\infty}$.  One key feature of orthogonal polynomials on the unit circle (OPUC) is the \textit{Szeg\H{o} recursion}, which states that for each $n\in\{0,1,2,\ldots\}$ there is a complex number $\alpha_n$ such that $|\alpha_n|<1$ and
\begin{equation}\label{opucdef}
\Phi_{n+1}(z)=z\Phi_n(z)-\overline{\alpha}_n\Phi_n^*(z),
\end{equation}
where $\Phi_n^*(z)=z^n\overline{\Phi_n(1/\overline{z})}$.  The sequence $\{\alpha_n\}_{n=0}^{\infty}$ is often called the sequence of \textit{Verblunsky coefficients} and Verblunsky's Theorem asserts that there is a bijection between infinite sequences in $\bbD$ and infinitely supported probability measures on the unit circle (see \cite[Section 1.7]{OPUC1}).

We will be interested in a related sequence of polynomials called the \textit{paraorthogonal polynomials on the unit circle} (POPUC), which were introduced in \cite{NJT} (see also \cite[Section 2.2]{OPUC1}).  Given the sequence $\{\Phi_n(z)\}_{n=0}^{\infty}$, one defines the degree $n+1$ paraorthogonal polynomial $\Phi_{n+1}^{(\beta_n)}(z)$ by
\begin{equation}\label{popucdef}
\Phi_{n+1}^{(\beta_n)}(z)=z\Phi_n(z)-\overline{\beta}_n\Phi_n^*(z),
\end{equation}
where $|\beta_n|=1$.  Thus, to every sequence $\{\alpha_n\}_{n=0}^{\infty}$ in $\bbD$ and every sequence $\{\beta_n\}_{n=0}^{\infty}$ in $\partial\bbD$, one can define the sequence of paraorthogonal polynomials $\{\Phi_{n+1}^{(\beta_n)}(z)\}_{n=0}^{\infty}$ using \eqref{opucdef} and \eqref{popucdef}.

Paraorthogonal polynomials have been well studied recently because of their relevance to a variety of applications.  These include their role in numerical quadrature formulas (see \cite{Golquad,NJT}), their description of solutions to certain electrostatics problems on the circle (see \cite{SimElec}), and their relationship to Poncelet polygons (see \cite{AMFSS}).  We will be interested in properties of the zeros of these polynomials and there exists a substantial literature on that subject.  Many results have been proven about interlacing properties (see \cite{CMR,RankOne,Lilian}), zero spacing (see \cite{Golquad,FineI,FineIV,NoBax}), and the relationship between the zeros and the underlying measure of orthogonality (see \cite{MFSRV,RankOne}).
Our primary interest will be in the spacing between zeros, though we will also have something to say about the bulk distribution of zeros.


The formula \eqref{popucdef} tells us that
\[
\Phib=0\qquad\iff\qquad\frac{\eitheta\Phi_n(\eitheta)}{\Phi_n^*(\eitheta)}=\overline{\beta}_n.
\]
Define $b_n(z):=\frac{\Phi_n(z)}{\Phi_n^*(z)}$, which is easily seen to be a Blaschke product.  From this observation, we see that all zeros of $\Phi_{n+1}^{(\beta_n)}$ lie on the unit circle and are simple.  Let us define the \textit{Pr\"{u}fer phase} $\eta_n$ of this Blaschke product by
\begin{equation}\label{pruferdef}
e^{i\eta_n(\theta)}=\eitheta b_n(\eitheta).
\end{equation}
Similar objects have also been studied in \cite{KStoi,FineIV,Milvoy,NoBax}.  Though our definition only defines $\eta_n$ up to an integer multiple of $2\pi$, we will be looking at changes in $\eta_n(\theta)$ as $\theta$ changes, so our choice of normalization will be irrelevant.  Formula 10.8 in \cite{FineIV} implies $\eta_n$ is strictly increasing and increases by $2(n+1)\pi$ as $\theta$ runs through any real interval of length $2\pi$.  We will also note here that if $\beta_n\in\partial\bbD$, then any interval $[a,b]\subseteq\bbR$ for which $\eta_n(b)-\eta_n(a)\geq2\pi$ contains a value $t$ so that $\Phi_{n+1}^{(\beta_n)}(e^{it})=0$.


The key to our analysis will be a recursion relation satisfied by $\{\eta_n\}_{n=0}^{\infty}$, which we now derive.  Let us set $\eta_0(\theta)=\theta$ for all $\theta\in\bbR$.  From our definition of $\eta_n$ and the relation
\[
b_{n+1}(z)=\frac{zb_n(z)-\bar{\alpha}_n}{1-\alpha_nzb_n(z)}
\]
(see \cite[Eq. 9.2.16]{OPUC2}), we find
\[
e^{i\eta_{n+1}(\theta)}=e^{i(\eta_n(\theta)+\theta)}\left(\frac{1-\bar{\alpha}_ne^{-i\eta_n(\theta)}}{1-\alpha_ne^{i\eta_n(\theta)}}\right)=e^{i\eta_n(\theta)+i\theta}\left(\frac{1-\bar{\alpha}_ne^{-i\eta_n(\theta)}}{|1-\alpha_ne^{i\eta_n(\theta)}|}\right)^2
\]
from which it follows that
\begin{eqnarray}\label{key}
\eta_{n+1}(\theta)=\eta_n(\theta)+\theta-2\arg[1-\alpha_ne^{i\eta_n(\theta)}]
\end{eqnarray}
where we let $\arg$ take values in $(-\pi,\pi]$ (note the similarity between this recurrence and that of \cite[Proposition 2.2]{KStoi}).  Since each $\alpha_n\in\bbD$, we see that in this recursion, the $\arg$ function will only take values in $(-\pi/2,\pi/2)$.  

Iterating the recursion \eqref{key} allows us to write
\begin{eqnarray}\label{sumrecur}
\eta_{n+1}(\theta)&=&(n+2)\theta-2\sum_{j=0}^{n}\arg[1-\alpha_jb_j(\eitheta)].
\end{eqnarray}
Observe that in this formula it is true that $\eta_{n+1}(\theta+2\pi)=\eta_n(\theta)\mod 2\pi$.  Thus, if we define $\eta_0(\theta)=\theta$ for all $\theta\in\bbR$ and then iteratively define $\eta_n(\theta)$ for all $n\in\bbN$ and $\theta\in\bbR$ by \eqref{sumrecur}, then \eqref{pruferdef} holds for all $n\in\bbN\cup\{0\}$ and $\theta\in\bbR$.

Our main goal is to prove several theorems regarding the zeros of POPUC.  The first two are improvements of a 2002 result of Golinskii \cite{Golquad}, which relates decay properties of the Verblunsky coefficients to the separation of zeros of the corresponding POPUC.  Specifically, he proved that if $\{\alpha_n\}_{n=0}^{\infty}\in\ell^2$, then the largest distance between any two zeros of $\Phi_{n+1}^{(\beta_n)}(z)$ is $O(n^{-1/2})$ as $\nri$.  We strengthen this estimate to $o(n^{-1/2})$ as $\nri$ and present an analogous result that applies when $\{\alpha_n\}_{n=0}^{\infty}\in\ell^p$ for some $p\in(1,\infty)$.  We also show that only a weak-type $\ell^2$ condition is required to obtain the $O(n^{-1/2})$ estimate.  We will precisely state and prove these results in Section \ref{baxter}.

Our final result concerns the bulk distribution of zeros of POPUC as $\nri$.  We show that - under an appropriate condition on the Verblunsky coefficients - arcs of a certain length must contain many zeros of $\Phi_{n+1}^{(\beta_n)}(z)$ when $n$ is very large.  This generalizes a result that is sometimes called the Mhaskar-Saff Theorem for POPUC.  We will precisely state and prove this result in Section \ref{mspopuc}.

\section{Zero Spacing}\label{baxter}

In this section, we discuss the relationship between decay properties of the sequence of Verblunsky coefficients and the spacing of consecutive zeros of the paraorthogonal polynomials.  This theorem is of particular interest given the results of \cite{KStoi}, which shows that the rate of decay of the Verblunsky coefficients has a profound influence on zero spacings.

For a sequence of Verblunsky coefficients $\bsa=\{\alpha_j\}_{j=0}^{\infty}$, we denote by $\|\bsa\|_{\ell^p}$ the $\ell^p$ norm of this sequence.  As mentioned above, \cite[Theorem 3]{Golquad} states that if $\|\bsa\|_{\ell^2}<\infty$, then the distance between neighboring zeros of $\Phi_n^{(\beta)}(z)$ is bounded by $C/\sqrt{n}$ for some explicit constant $C$ (depending on $\bsa$, but not on $\beta$ or $n$).
Our first theorem is a strengthening of that result.

\begin{theorem}\label{zerospace}
Suppose $p\in(1,\infty)$ is fixed and $\|\bsa\|_{\ell^p}<\infty$.  For each $n\in\bbN\cup\{0\}$, choose $\beta_n\in\partial\bbD$ and define the paraorthogonal polynomials $\{\Phi_{n+1}^{(\beta_n)}(z)\}_{n=0}^{\infty}$ as in Section \ref{intro}.  Let $\{e^{i\theta_j^{(n+1)}}\}_{j=1}^{n+1}$ be the zeros of $\Phi_{n+1}^{(\beta_{n})}(z)$ arranged so that $0\leq\theta_1^{(n+1)}<\theta_2^{(n+1)}<\cdots<\theta_{n+1}^{(n+1)}$.  Then $($with $\theta_{n+2}^{(n+1)}=\theta_{1}^{(n+1)}+2\pi)$
\[
\lim_{\nri}\left[n^{1/p}\cdot\sup_{1\leq j\leq n+1}\left|\theta_{j+1}^{(n+1)}-\theta_{j}^{(n+1)}\right|\right]=0.
\]
\end{theorem}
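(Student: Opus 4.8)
The plan is to translate the spacing between neighboring zeros into an increment of the Pr\"{u}fer phase $\eta_n$ and then control that increment with the iterated recursion \eqref{sumrecur}. The zeros of $\Phi_{n+1}^{(\beta_n)}$ are exactly the $\theta$ with $e^{i\eta_n(\theta)}=\overline{\beta}_n$, and since $\eta_n$ is strictly increasing and grows by $2(n+1)\pi$ over any period, consecutive zeros $\theta_j^{(n+1)}$ and $\theta_{j+1}^{(n+1)}$ satisfy $\eta_n(\theta_{j+1}^{(n+1)})-\eta_n(\theta_j^{(n+1)})=2\pi$ (the convention $\theta_{n+2}^{(n+1)}=\theta_1^{(n+1)}+2\pi$ handles the wrap-around pair). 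Expanding this identity through \eqref{sumrecur} with $n$ replaced by $n-1$, so that $\eta_n(\theta)=(n+1)\theta-2\sum_{k=0}^{n-1}\arg[1-\alpha_k b_k(e^{i\theta})]$, and writing $\Delta_k:=\arg[1-\alpha_k b_k(e^{i\theta_{j+1}^{(n+1)}})]-\arg[1-\alpha_k b_k(e^{i\theta_j^{(n+1)}})]$, I obtain
\[
\theta_{j+1}^{(n+1)}-\theta_j^{(n+1)}=\frac{2\pi}{n+1}+\frac{2}{n+1}\sum_{k=0}^{n-1}\Delta_k.
\]

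The next step is a bound on $\Delta_k$ that is uniform in $j$, $n$, and $\beta_n$. Since $|b_k(e^{i\theta})|=1$ and $|\alpha_k|<1$, the point $1-\alpha_k b_k(e^{i\theta})$ lies on the circle of radius $|\alpha_k|$ about $1$, which avoids the origin, so its argument stays within $[-\arcsin|\alpha_k|,\arcsin|\alpha_k|]$; hence $|\Delta_k|\le 2\arcsin|\alpha_k|\le\pi|\alpha_k|$. Substituting into the display above gives the uniform spacing estimate
\[
\sup_{1\le j\le n+1}\bigl|\theta_{j+1}^{(n+1)}-\theta_j^{(n+1)}\bigr|\le\frac{2\pi}{n+1}\Bigl(1+\sum_{k=0}^{n-1}|\alpha_k|\Bigr).
\]

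Finally I would multiply by $n^{1/p}$ and let $\nri$. The contribution of the constant $1$ is $O(n^{1/p-1})\to0$ because $p>1$. For the sum I split at a parameter $K$: the head $\sum_{k=0}^{K-1}|\alpha_k|$ is a fixed constant, so after multiplication by $n^{1/p}/(n+1)$ it also tends to $0$, while H\"{o}lder's inequality bounds the tail by $\bigl(\sum_{k\ge K}|\alpha_k|^p\bigr)^{1/p}n^{1/q}$ with $1/p+1/q=1$. Since $n^{1/p}n^{1/q}/(n+1)\to1$, this yields $\limsup_{\nri}\bigl[n^{1/p}\sup_j|\theta_{j+1}^{(n+1)}-\theta_j^{(n+1)}|\bigr]\le2\pi\bigl(\sum_{k\ge K}|\alpha_k|^p\bigr)^{1/p}$ for every $K$, and letting $K\to\infty$ (using $\bsa\in\ell^p$) forces the limit to be $0$. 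The crux is exactly this last manoeuvre: the crude bound $|\Delta_k|\le\pi|\alpha_k|$ combined with a single application of H\"{o}lder only reproduces Golinskii's $O(n^{-1/p})$ rate, and it is the vanishing of the $\ell^p$ tail that promotes $O$ to $o$; care is also needed to keep every estimate independent of $j$ so that passing to the supremum over neighboring zeros is justified.
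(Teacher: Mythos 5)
Your proof is correct and follows essentially the same route as the paper: both arguments rest on the Pr\"{u}fer phase formula \eqref{sumrecur}, the bound $|\arg[1-\alpha_kb_k(e^{i\theta})]|\leq\arcsin|\alpha_k|\leq\tfrac{\pi}{2}|\alpha_k|$, and a head/tail split in which the vanishing $\ell^p$ tail upgrades Golinskii's $O(n^{-1/p})$ to $o(n^{-1/p})$. The only cosmetic differences are that you extract an exact identity for the spacing from $\eta_n(\theta_{j+1}^{(n+1)})-\eta_n(\theta_j^{(n+1)})=2\pi$ and finish with H\"{o}lder's inequality, whereas the paper shows the phase increases by more than $2\pi$ over a prescribed interval of length $C(n+1)^{-1/p}$ and uses Jensen's inequality.
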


\begin{proof}
It suffices to show that for any $C>0$ it is true that
\[
\sup_{1\leq j\leq n+1}\left|\theta_{j+1}^{(n+1)}-\theta_{j}^{(n+1)}\right|\leq\frac{C}{n^{1/p}}
\]
for all sufficiently large $n$.

Since $1< p<\infty$, Jensen's inequality implies
\[
\left(\frac{1}{n+1}\sum_{j=0}^n|\alpha_j|\right)^p\leq\frac{1}{n+1}\sum_{j=0}^n|\alpha_j|^p\leq\frac{\|\bsa\|_{\ell^p}^p}{n+1}
\]
and thus
\begin{equation}\label{1bound}
\sum_{j=0}^n|\alpha_j|\leq (n+1)^{1-1/p}\|\bsa\|_{\ell^p}.
\end{equation}
Similar reasoning shows that if $\bsa_N=\{\alpha_j\}_{j=N}^{\infty}$, then
\begin{equation}\label{Nbound}
\sum_{j=N}^n|\alpha_j|\leq (n-N+1)^{1-1/p}\|\bsa_N\|_{\ell^p}.
\end{equation}

Fix $\lambda\in\bbR$ and let $\tau=\lambda+\frac{C}{(n+1)^{1/p}}$ for some positive constant $C$.  If we use \eqref{sumrecur} to calculate $\eta_n(\tau)-\eta_n(\lambda)$, then we find
\begin{align*}
\eta_n(\tau)-\eta_n(\lambda)&=C(n+1)^{1-1/p}-2\sum_{j=0}^{n-1}\left(\arg\left[1-\alpha_jb_j(\eitau)\right]-\arg\left[1-\alpha_jb_j(\eilambda)\right]\right)
\end{align*}
Since $\tau\rightarrow\lambda$ as $\nri$ (uniformly in $\lambda\in\bbR$) 
we know that for any fixed $N$, the first $N$ terms in the sum are $o(1)$ as $\nri$.  Thus, we can rewrite the above as
\begin{align*}
\eta_n(\tau)-\eta_n(\lambda)&=C(n+1)^{1-1/p}-2\sum_{j=N}^{n-1}\left(\arg\left[1-\alpha_jb_j(\eitau)\right]-\arg\left[1-\alpha_jb_j(\eilambda)\right]\right)+o(1)
\end{align*}
as $\nri$.  Notice that for any $x\in\bbR$ it holds that $|\arg[1-\alpha_jb_j(e^{ix})]|\leq\arcsin|\alpha_j|\leq\frac{\pi}{2}|\alpha_j|$ so we can write
\begin{align*}
\eta_n(\tau)-\eta_n(\lambda)&\geq C(n+1)^{1-1/p}-2\pi\sum_{j=N}^{n-1}|\alpha_j|+o(1)
\end{align*}
as $\nri$.  By \eqref{Nbound}, we can then write
\begin{align}\label{last}
\eta_n(\tau)-\eta_n(\lambda)&\geq C(n+1)^{1-1/p}-2\pi(n-N)^{1-1/p}\|\bsa_N\|_{\ell^p}+o(1)
\end{align}
as $\nri$.

Now set $\lambda=\theta_j^{(n+1)}$ and choose $N$ so large that $2\pi\|\bsa_N\|_{\ell^p}<C$ (this can be done since $\|\bsa\|_{\ell^p}<\infty$).  Since $p>1$, equation (\ref{last}) implies $\eta_n(\tau)-\eta_n(\theta_j^{(n+1)})>2\pi$ when $n$ is large.  Thus the interval $(\theta_j^{(n+1)},\theta_j^{(n+1)}+\frac{C}{(n+1)^{1/p}})$ contains the argument of a zero of $\Phi_{n+1}^{(\beta_n)}(z)$.  Since this conclusion holds for all $j$ and $C>0$ was arbitrary, this yields the desired result.
\end{proof}


\noindent\textit{Remark.}  The method used to prove Theorem \ref{zerospace} also applies in the case $p=1$ and provides a new proof of the uniform clock spacing result proved in \cite[Theorem 4.1]{FineI}.

\medskip

 Theorem \ref{zerospace} relates zero spacings of paraorthogonal polynomials with decay properties of the Verblunsky coefficients.  From a spectral theoretic point of view, this is a meaningful result for any $p>1$.  From a measure theoretic point of view, this theorem is most useful in the case $1<p\leq2$.  Indeed, \cite[Theorem 2.10.1]{OPUC1} tells us that very little can be deduced about the measure corresponding to $\bsa$ from the hypothesis $\|\bsa\|_{\ell^p}<\infty$ for $p>2$. When $p\leq2$ one can make meaningful conclusions.  The condition $\|\bsa\|_{\ell^2}<\infty$ is equivalent to the underlying measure of orthogonality being in the Szeg\H{o} class, which has many equivalent formulations (see \cite[Section 2.7]{OPUC1}).  The case $1<p<2$ has more subtle implications for the measure of orthogonality, which are discussed in \cite{DK05} (see also \cite[Section 2.12]{OPUC1}).


Using a nearly identical argument as was used to prove Theorem \ref{zerospace}, one can prove our next result, which shows what happens if we replace the $\ell^p$ condition of Theorem \ref{zerospace} with a weak-type $\ell^p$ condition.  To state it, we need to define some notation.

If $\bsa=\{\alpha_n\}_{n=0}^{\infty}$ is a sequence of Verblunsky coefficients satisfying $\alpha_n\rightarrow0$ as $\nri$, then we can define its \textit{decreasing rearrangement} $\tilde{\bsa}=\{\tilde{\alpha}_n\}_{n=0}^{\infty}$ so that $\bsa=\tilde{\bsa}$ as sets and $|\tilde{\alpha}_0|\geq|\tilde{\alpha}_1|\geq|\tilde{\alpha}_2|\geq\cdots$ (this is possible since $\alpha_n\rightarrow0$ as $\nri$).  With this notation, we can now state our next result.

\begin{theorem}\label{zerospace2}
Suppose $\bsa=\{\alpha_n\}_{n=0}^{\infty}$ is a sequence of Verblunsky coefficients satisfying $\alpha_n\rightarrow0$ as $\nri$ and suppose $\tilde{\bsa}=\{\tilde{\alpha}_n\}_{n=0}^{\infty}$ is its decreasing rearrangement.  Assume
\[
\sum_{n=0}^N|\tilde{\alpha}_n|\leq f(N)
\]  
for some increasing function $f$ satisfying $\lim_{\nri}f(n)=\infty$.  Using the same notation as in Theorem \ref{zerospace}, it hols that
\[
\limsup_{\nri}\left[\frac{n}{f(n)}\cdot\sup_{1\leq j\leq n+1}\left|\theta_{j+1}^{(n+1)}-\theta_{j}^{(n+1)}\right|\right]\leq4.
\]
\end{theorem}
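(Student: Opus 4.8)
The plan is to follow the proof of Theorem~\ref{zerospace} almost verbatim, the only genuinely new ingredients being an application of the rearrangement inequality to the hypothesis and a sharpening of the elementary bound on $\arcsin$. As before, it suffices to show that for every $C>4$ and all sufficiently large $n$, every interval of length $L_n:=Cf(n)/n$ contains the argument of a zero of $\Phib$; this forces $\sup_{1\le j\le n+1}|\theta_{j+1}^{(n+1)}-\theta_j^{(n+1)}|\le L_n$, hence $\frac{n}{f(n)}\sup_j|\theta_{j+1}^{(n+1)}-\theta_j^{(n+1)}|\le C$ for large $n$, and letting $C\downarrow4$ gives the claimed $\limsup\le4$. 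Concretely, I would fix $\lambda\in\bbR$, set $\tau=\lambda+L_n$, and aim to show $\eta_n(\tau)-\eta_n(\lambda)>2\pi$ for large $n$, invoking the criterion from Section~\ref{intro} that such a phase increment guarantees a zero in $(\lambda,\tau)$.

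Applying \eqref{sumrecur} exactly as in Theorem~\ref{zerospace} gives
\[
\eta_n(\tau)-\eta_n(\lambda)=(n+1)L_n-2\sum_{j=0}^{n-1}\Bigl(\arg[1-\alpha_jb_j(\eitau)]-\arg[1-\alpha_jb_j(\eilambda)]\Bigr),
\]
where $(n+1)L_n\ge Cf(n)$. Each summand is controlled by the pointwise bound $|\arg[1-\alpha_jb_j(e^{ix})]|\le\arcsin|\alpha_j|$, so each difference is at most $2\arcsin|\alpha_j|$ in absolute value. The crucial refinement over Theorem~\ref{zerospace} is that, because $\alpha_n\to0$, for any $\varepsilon>0$ there is an $N$ (independent of $n$) with $\arcsin|\alpha_j|\le(1+\varepsilon)|\alpha_j|$ for all $j\ge N$. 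This replaces the factor $\tfrac{\pi}{2}$ used previously by a factor arbitrarily close to $1$, which is precisely what produces the constant $4$ rather than $2\pi$.

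The finitely many terms with $j<N$ (where $|\alpha_j|$ may be close to $1$) I would handle separately, either noting that each is $o(1)$ as $\nri$ when $L_n\to0$, or simply bounding them by the fixed constant $M_N:=2\pi\sum_{j=0}^{N-1}|\alpha_j|$, which is $o(f(n))$ since $f(n)\to\infty$. For the tail I would combine the refined $\arcsin$ estimate with the rearrangement inequality: since $\{|\tilde\alpha_j|\}$ is the decreasing rearrangement of $\bsa$, the sum of any $n$ of its terms is dominated by the sum of its $n$ largest, so
\[
\sum_{j=N}^{n-1}|\alpha_j|\le\sum_{j=0}^{n-1}|\alpha_j|\le\sum_{j=0}^{n-1}|\tilde\alpha_j|\le f(n-1)\le f(n).
\]
Together these give
\[
\eta_n(\tau)-\eta_n(\lambda)\ge Cf(n)-M_N-4(1+\varepsilon)f(n)=\bigl(C-4(1+\varepsilon)\bigr)f(n)-M_N,
\]
and choosing $\varepsilon$ small enough that $4(1+\varepsilon)<C$ makes the coefficient of $f(n)$ positive; as $f(n)\to\infty$ with $M_N$ fixed, the right-hand side exceeds $2\pi$ for all large $n$.

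I expect the main obstacle to be securing the sharp constant $4$ rather than a cruder one. This requires care at exactly two points: peeling off the indices $j<N$, where the cheap estimate $\arcsin|\alpha_j|\le\tfrac{\pi}{2}|\alpha_j|$ is wasteful, and absorbing their contribution into a constant that is ultimately negligible against $f(n)$; and exploiting $\alpha_n\to0$ on the complementary tail to bring the $\arcsin$ factor down to essentially $1$. The rearrangement step is the only new analytic input beyond Theorem~\ref{zerospace}, and the rest is bookkeeping to confirm all error terms are $o(f(n))$.
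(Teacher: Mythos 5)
Your proposal is correct and follows essentially the same route as the paper: iterate the Pr\"ufer phase recursion over an interval of length $Cf(n)/n$, discard the first $N$ terms as negligible, and use $\alpha_n\to0$ to sharpen $\arcsin|\alpha_j|$ to $(1+\varepsilon)|\alpha_j|$ on the tail, with the decreasing rearrangement controlling $\sum_{j\le n-1}|\alpha_j|$ by $f(n)$. Your bookkeeping (bounding the head by a fixed constant $M_N=o(f(n))$ and passing to $\sum_{j=0}^{n-1}|\tilde\alpha_j|$ before invoking the hypothesis) is if anything slightly cleaner than the paper's sketch, but it is the same argument.
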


\noindent\textit{Remark.}  It follows from Theorem \ref{zerospace2} that the distance between zeros of $\Phi_{n+1}^{(\beta_n)}(z)$ for large $n$ will be no more than a constant multiple of $f(n)/n$.

\begin{proof}[Sketch of Proof]
Much of the proof is the same as the proof of Theorem \ref{zerospace}, so we only include some details here that are different.  As in that proof, we need to estimate the sum
\[
2\sum_{j=0}^{n-1}\left(\arg\left[1-\alpha_jb_j(\eitau)\right]-\arg\left[1-\alpha_jb_j(\eilambda)\right]\right)
\]
with $\tau$ and $\lambda$ as in the proof of Theorem \ref{zerospace}.  As before, we can say that the first $N$ terms are $o(1)$ as $\nri$, so it suffices to consider
\[
2\sum_{j=N}^{n-1}\left(\arg\left[1-\alpha_jb_j(\eitau)\right]-\arg\left[1-\alpha_jb_j(\eilambda)\right]\right)
\]
for any $N$ of our choosing.  Let us fix $\varepsilon>0$ and then choose $N$ so large that $\arcsin|\tilde{\alpha}_N|\leq(1+\varepsilon)|\tilde{\alpha}_N|$.  Then we have
\begin{align*}
&\left|2\sum_{j=N}^{n-1}\left(\arg\left[1-\alpha_jb_j(\eitau)\right]-\arg\left[1-\alpha_jb_j(\eilambda)\right]\right)\right|\leq4\sum_{j=N}^{n-1}\arcsin|\alpha_j|\\
&\qquad\qquad\qquad\leq4\sum_{j=N}^{n-1}\arcsin|\tilde{\alpha}_j|\leq4(1+\varepsilon)\sum_{j=N}^{n-1}|\tilde{\alpha}_j|\leq4(1+\varepsilon)f(n)
\end{align*}
Using the fact that $\varepsilon>0$ is arbitrary, the rest of the proof follows as in the proof of Theorem \ref{zerospace}.
\end{proof}

\smallskip

\noindent\textbf{Example 1.}  Suppose $\bsa$ is such that
\[
|\tilde{\alpha}_n|\leq\frac{A}{(n+2)^q}
\]
for some $q\in(0,1)$.  Then
\[
\sum_{n=0}^N|\tilde{\alpha}_n|\leq \frac{AN^{1-q}}{1-q}
\]  
and hence Theorem \ref{zerospace2} tells us that as $\nri$, the largest spacing between zeros of $\Phi_{n+1}^{(\beta_n)}(z)$ will be no larger than a constant times $n^{-q}$.  Setting $q=1/2$ we see that a weak-type $\ell^2$ condition is all that is required to obtain a $O(n^{-1/2})$ estimate on zero spacings.

\smallskip

\noindent\textbf{Example 2.}  Suppose $\bsa$ is such that
\[
|\tilde{\alpha}_n|\leq\frac{A}{n+2}.
\]
Then
\[
\sum_{n=0}^N|\tilde{\alpha}_n|\leq A\log (N+2)
\]  
and hence Theorem \ref{zerospace2} tells us that as $\nri$, the largest spacing between zeros of $\Phi_{n+1}^{(\beta_n)}(z)$ will be no larger than a constant times $\log n/n$.  Notice the similarity between this conclusion and the conclusion in the second part of \cite[Theorem 3]{Golquad}.

\smallskip

\noindent\textbf{Example 3.}  Suppose $\bsa$ is such that
\[
|\tilde{\alpha}_n|\leq\frac{A}{\log(n+2)}.
\]
Then one can calculate that there is a constant $B>0$ so that for large $N$ it holds that
\[
\sum_{n=0}^N|\tilde{\alpha}_n|\leq \frac{BN}{\log N}
\]  
Theorem \ref{zerospace2} tells us that as $\nri$, the largest spacing between zeros of $\Phi_{n+1}^{(\beta_n)}(z)$ will be no larger than a constant times $1/\log n$.

\section{A Generalized Mhaskar-Saff Theorem for POPUC}\label{mspopuc}

Notice that the proof of Theorem \ref{zerospace} shows that if $C>0$ and $p>1$, then for large $n$ the interval $[\lambda,\lambda+\frac{C}{(n+1)^{1/p}}]$ contains the arguments of many zeros of $\Phi_n^{(\beta)}(z)$ for any $\lambda\in\bbR$ and $\beta\in\partial\bbD$.  Our goal in this section is to make this statement more precise and provide conditions under which an arc must contain a positive fraction of the zeros of $\Phi_n^{(\beta)}(z)$.  In other words, if we define
\begin{equation}\label{nudef}
d\nu_n=\frac{1}{n}\sum_{j=1}^n\delta_{\theta_j^{(n)}},
\end{equation}
to be the counting measure of the arguments of the zeros of $\Phi_n^{(\beta_{n-1})}$, then we want to find intervals that must receive positive weight from any weak-$*$ limit of the measures $\{\nu_n\}_{n=1}^{\infty}$.

An example of such a result is known as the Mhaskar-Saff Theorem for POPUC\footnote{The Mhaskar-Saff Theorem applies to the zeros of OPUC and can be found in \cite[Theorem 2.3]{MhaSaff}.} and states that if
\[
\lim_{\jri}\frac{1}{n_j}\sum_{k=0}^{n_j-1}|\alpha_k|=0
\]
for some sequence $n_j$ increasing to infinity then the asymptotic distribution of the zeros of $\Phi_{n_j}^{(\beta_{n_j-1})}(z)$ is uniform on the unit circle for any sequence of $\beta_{n_j}\in\partial\bbD$ (see \cite[Theorem 8.1.2 $\&$ 8.2.7]{OPUC1}).  
We will prove the following generalization of that result.

\begin{theorem}\label{newmsaff}
Suppose $\{\alpha_n\}_{n\geq0}$ is a collection of Verblunsky coefficients satisfying
\begin{equation}\label{rlim}
\lim_{\jri}\frac{1}{n_j}\sum_{k=0}^{n_j-1}|\alpha_k|=r\in[0,1)
\end{equation}
for some sequence $\{n_j\}_{j=1}^{\infty}$ increasing to infinity.  Suppose $\nu$ is a weak-$*$ limit of the measures $\{\nu_{n_j}\}_{j=1}^{\infty}$ defined in \eqref{nudef}.  Then any interval of length $\Delta$ has $\nu$ measure in the interval $\left[\frac{\Delta}{2\pi}-r,\frac{\Delta}{2\pi}+r\right]$.  In particular, every interval of length larger that $2\pi r$ has positive $\nu$ measure.
\end{theorem}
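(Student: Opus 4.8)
The plan is to translate the $\nu_n$-measure of an interval into an increment of the Pr\"ufer phase and to estimate that increment with \eqref{sumrecur}. Fix a closed interval $I=[a,b]$ of length $\Delta=b-a$. The zeros of $\Phi_n^{(\beta_{n-1})}$ are exactly the points $e^{i\theta}$ with $\eta_{n-1}(\theta)\equiv\arg\overline{\beta}_{n-1}\pmod{2\pi}$, and $\eta_{n-1}$ is continuous and strictly increasing, so the number of such zeros with argument in $I$ is the number of multiples of $2\pi$ that $\eta_{n-1}$ passes through on $I$. That count equals $\frac{\eta_{n-1}(b)-\eta_{n-1}(a)}{2\pi}$ up to an error bounded by $1$, so from \eqref{nudef},
\[
\nu_n(I)=\frac{\eta_{n-1}(b)-\eta_{n-1}(a)}{2\pi n}+O\!\left(\tfrac1n\right).
\]

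Next I would insert the formula $\eta_{n-1}(\theta)=n\theta-2\sum_{j=0}^{n-2}\arg[1-\alpha_jb_j(\eitheta)]$ coming from \eqref{sumrecur}. The linear part contributes exactly $\frac{n\Delta}{2\pi n}=\frac{\Delta}{2\pi}$, so the whole theorem reduces to showing that the remaining sum is small. Using $|\arg[1-\alpha_jb_j(e^{ix})]|\le\arcsin|\alpha_j|\le\frac{\pi}{2}|\alpha_j|$, the same bound used in the proof of Theorem \ref{zerospace}, I would estimate
\[
\left|2\sum_{j=0}^{n-2}\left(\arg[1-\alpha_jb_j(e^{ib})]-\arg[1-\alpha_jb_j(e^{ia})]\right)\right|\le4\sum_{j=0}^{n-2}\arcsin|\alpha_j|\le2\pi\sum_{j=0}^{n-2}|\alpha_j|.
\]
After dividing by $2\pi n$ this gives the uniform bound
\[
\left|\nu_n(I)-\frac{\Delta}{2\pi}\right|\le\frac1n\sum_{j=0}^{n-2}|\alpha_j|+O\!\left(\tfrac1n\right),
\]
whose right-hand side depends on neither $I$ nor $\beta_{n-1}$. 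Taking $n=n_j$ and applying \eqref{rlim}, the right-hand side converges to $r$.

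Finally I would pass to the weak-$*$ limit along $\{n_j\}$. For any interval $I$ whose endpoints are not atoms of $\nu$ one has $\nu_{n_j}(I)\to\nu(I)$, and the estimate above then forces $\left|\nu(I)-\frac{\Delta}{2\pi}\right|\le r$, i.e. $\nu(I)\in\left[\frac{\Delta}{2\pi}-r,\frac{\Delta}{2\pi}+r\right]$. Because $\nu$ carries only countably many atoms, an arbitrary interval of length $\Delta$ can be squeezed between intervals with atom-free endpoints whose lengths tend to $\Delta$; monotonicity of $\nu$ then extends the two-sided bound to every interval of length $\Delta$. The final claim is immediate: when $\Delta>2\pi r$ the lower bound $\frac{\Delta}{2\pi}-r$ is positive, so $\nu(I)>0$.

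The individual steps are routine, so the main points requiring care are bookkeeping rather than deep: first, confirming in the opening paragraph that the gap between the zero count and $\frac{\eta_{n-1}(b)-\eta_{n-1}(a)}{2\pi}$ is genuinely $O(1)$ and therefore washes out after division by $n$; and second, ensuring the error estimate is uniform over all intervals, since it is precisely this uniformity that licenses the simultaneous passage to the weak-$*$ limit together with the approximation argument that disposes of boundary atoms.
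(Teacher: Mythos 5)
Your proposal is correct and follows essentially the same route as the paper: both convert the zero count in an interval to a Pr\"ufer phase increment, expand it via \eqref{sumrecur}, and bound the nonlinear terms by $4\sum\arcsin|\alpha_j|\le2\pi\sum|\alpha_j|$ before dividing by $2\pi n_j$ and invoking \eqref{rlim}. The only difference is cosmetic bookkeeping at the weak-$*$ limit stage (you perturb the endpoints to avoid atoms, the paper perturbs the interval length by $\varepsilon$ inside the phase computation), and both devices work.
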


\noindent\textit{Remark.}  If $\Delta<2\pi r$, then Theorem \ref{newmsaff} places no lower bound on the $\nu$-measure of the interval.  Similarly, if $\frac{\Delta}{2\pi}+r>1$, then Theorem \ref{newmsaff} places no upper bound on the $\nu$-measure of the interval.

\smallskip

\noindent\textit{Remark.}  Notice that by setting $r=0$ in Theorem \ref{newmsaff} we obtain a new proof of the Mhaskar-Saff Theorem for POPUC.

\begin{proof}
Fix $\Delta\in(0,2\pi)$, $\phi\in(-\pi,\pi]$, $\varepsilon>0$, and let
\[
\delta_k=\eta_k(\phi+\Delta+\varepsilon)-\eta_k(\phi).
\]
As in the proof of Theorem \ref{zerospace}, we have
\[
\delta_{n_j}=(n_j+1)(\Delta+\varepsilon)-2\sum_{k=0}^{n_j-1}\left(\arg\left[1-\alpha_kb_k(e^{i(\phi+\Delta+\varepsilon)}\right]-\arg\left[1-\alpha_kb_k(e^{i\phi})\right]\right).
\]
Therefore, we have
\begin{eqnarray*}
\delta_{n_j}\leq(n_j+1)(\Delta+\varepsilon)+4\sum_{k=0}^{n_j-1}\arcsin(|\alpha_k|)\leq(n_j+1)(\Delta+\varepsilon)+2\pi\sum_{k=0}^{n_j-1}|\alpha_k|.
\end{eqnarray*}
This implies
\[
\nu((\phi,\phi+\Delta))\leq\limsup_{\jri}\frac{\delta_{n_j}}{2\pi n_j}=\limsup_{\jri}\left[\frac{\Delta+\varepsilon}{2\pi}+\frac{1}{n_j}\sum_{k=0}^{n_j-1}|\alpha_k|\right]=\frac{\Delta+\varepsilon}{2\pi}+r.
\]
Since $\varepsilon>0$, $\Delta$, and $\phi$ were arbitrary, this proves the desired upper bound.

Applying similar reasoning with $\Delta+\varepsilon$ replaced by $\Delta-\varepsilon$ shows
\[
\delta_{n_j}\geq(n_j+1)(\Delta-\varepsilon)-2\pi\sum_{k=0}^{n_j-1}|\alpha_k|.
\]
This implies
\[
\nu((\phi,\phi+\Delta))\geq\liminf_{\jri}\frac{\delta_{n_j}}{2\pi n_j}=\liminf_{\jri}\left[\frac{\Delta-\varepsilon}{2\pi}-\frac{1}{n_j}\sum_{k=0}^{n_j-1}|\alpha_k|\right]=\frac{\Delta-\varepsilon}{2\pi}-r.
\]
Since $\varepsilon>0$, $\Delta$, and $\phi$ were arbitrary, this proves the desired lower bound.
\end{proof}

Our next example shows that the lower bound in Theorem \ref{newmsaff} is sharp.

\subsection*{Example: Geronimus Polynomials}  Consider the case when $\alpha_n\equiv\alpha\in(-1,0)$.  The corresponding orthogonal polynomials are known as the Geronimus polynomials and are discussed in detail in \cite[Section 1.6]{OPUC1}.  Clearly one can apply Theorem \ref{newmsaff} in this setting with $r=|\alpha|$.  In this case, one finds that the interval $\theta\in(-2\arcsin|\alpha|,2\arcsin|\alpha|)$ does not contain any zeros of $\Phi_n^{(-1)}(\eitheta)$.  This interval has length $4\arcsin r$ and if $r$ is very close to $1$, then this interval has length very close to $2\pi r$.  Thus, we can find intervals whose length is arbitrarily close to $2\pi r$ and which have $\nu$-measure $0$ for every weak-$*$ limit as in Theorem \ref{newmsaff}.  This shows that the lower bound in Theorem \ref{newmsaff} is sharp.

\subsection*{Application: Random Verblunsky Coefficients}  Consider the situation in which the Verblunsky coefficients are chosen randomly as in \cite{DS06,KiKo,KN,KStoi,Stoiciu,Stoiciu2,Stoiciu3}.  In this case, the zeros of the corresponding paraorthogonal polynomials form a collection of random points on the unit circle.  The randomness of these zeros depends heavily on the randomness of the Verblunsky coefficients.

We will be primarily interested in the case when the sequence $\{\alpha_n\}_{n=0}^{\infty}$ is a sequence of i.i.d. random variables.  In this case, it is known that for certain rotation invariant distributions, the zeros of the corresponding POPUC exhibit Poisson statistics (see \cite{Stoiciu,Stoiciu2,Stoiciu3}).  This indicates that the zeros exhibit a lack of correlation and behave as independent random points on the unit circle.  In this case, one expects to see clumps and gaps in the distribution of zeros of $\Phi_n^{(\beta)}(z)$ when $n$ is large (see \cite[Figure 1]{Stoiciu3} for a picture of this phenomenon).  We can apply Theorem \ref{newmsaff} to place upper bounds on how large a gap we might see in any random setting.

Indeed, if the Verblunsky coefficients are i.i.d., then the Strong Law of Large Numbers indicates that \eqref{rlim} holds almost surely with $n_j=n$ and for some $r\in(0,1)$ (we exclude the case $r=0$ because that case is trivial).  Thus, we expect that for large $n$, any arc of length at least $2\pi r$ will contain a zero (in fact many zeros) of the paraorthogonal polynomial $\Phi_n^{(\beta)}(z)$ for any $\beta\in\partial\bbD$.

\newpage





\end{document}